\title{\LARGE \bf
An Application of Joint Spectral Radius in Power Control Problem for Wireless Communications*}
\author{Vahid S. Bokharaie$^{1}$ and Gholamreza Parsaee$^{2}$
\thanks{*This work was supported by Science Foundation Ireland award SFI/09/SRC/E1780.}
\thanks{$^{1}$V. S. Bokharaie is with Mathematics Applications Consortium for Science and Industry (MACSI), Univeristy of Limerick, Ireland.
        {\tt\small vahid.bokharaie@ul.ie}}%
\thanks{$^{2}$G. Parsaee is with Plano-net Engineering Co.
        {\tt\small reza\_parsaee@yahoo.com}}%
}
\newtheorem{thm}{Theorem}[section]
\newtheorem{definition}{Definition}[section]
\newcommand{\Rp}{\mathbb{R}^n_+}%
\newcommand{\NN}{\mathbb{N}}
\newcommand{\RR}{\mathbb{R}}                                  
\newcommand{\BB}{\mathcal{B}}                                  
\newcommand{\MM}{\mathcal{M}}
\begin{document}

\maketitle
\thispagestyle{empty}
\pagestyle{empty}



\begin{abstract}                          
Resource management, including power control, is one of the most essential functionalities of any wireless telecommunication system. Various transmitter power-control methods have been developed to deliver a desired quality of service in wireless networks. We consider two of these methods: Distributed Power Control and Distributed Balancing Algorithm schemes. We use the concept of joint spectral radius to come up with conditions for convergence of the transmitted power in these two schemes when the gains on all the communications links are assumed to vary at each time-step.
\end{abstract}

\section{Introduction} \label{sec:intro}
In this paper, we deal with the problem of power control in a mobile network. The main idea behind power control is to transmit the signal with minimum power required to maintain the desirable quality of service (QoS) \cite{Stu11}. The immediate benefit of power control is saving energy. It helps to prolong mobile battery life and also cut the cost of network operation on base station subsystem (BSS) \cite{CHLW08}. In majority of today's wireless networks, however, power control is used for a more important aim: to reduce interference in the common media, air. It therefore helps to obtain better quality under similar circumstances and an increase in network capacity \cite{CHLW08,CT91}. 
It can be implied from the above, that the main problem in power control is finding the optimum set of transmission powers for each pair of communicators. The answer depends on three main factors \cite{TZ07}: (i) the desired QoS, which is normally expressed in terms of demanded throughput, tolerable bit error rate and delay \cite{Peu99}, (ii) level of interference in the air, and (iii) characteristics of the channel between the certain transmitter and receiver in question, such as instant attenuation or fading profile. Desired QoS can be roughly translated to a minimum requirement for Signal to Noise plus Interference Ratio (SINR) \cite{ML02}. Knowing SINR requirement, the present level of interference and noise power, minimum needed signal power at receiver antenna can be calculated. Finally by adding the channel attenuation we can obtain the minimum power that the transmitter must radiate. Apart from the problem of getting an estimation of channel attenuation and interference at the transmitter side, the main challenge here is that interference level is itself affected by the behaviour of all other users, as reducing or increasing power for one user changes the interference observed by others, causes them to change their power in turn, and so on.

Addressing these issues, in this manuscript, we consider a mobile network in which the gain on all communication links vary in each time step. Using the concept of joint spectral radius (as defined in the next section), we present conditions for convergence of the transmitter power in Distributed Power Control (DPC) and Distributed Balancing Algorithm schemes (DBA) schemes.

\section{Problem Description} \label{sec:bground}
We consider a mobile network with $N$ channels (where a channel can be a frequency band or a time slot), and channels are being reused according to some arbitrary channel assignment scheme. We assume $m$ mobiles are using the channels and without loss of generality we only study the uplinks, but depending on the radio access systems in use the presented results might be applicable to downlinks as well.

We define

\begin{itemize}
\item {$P_i$}: Power transmitted by the $i$th mobile. 
\item {$G_{ij}$}: The gain on the communication link between the $i$th base and the $j$th mobile with $G_{ij}\geq 0$ for all $i,j=1, \cdots, m$.
\item{$\gamma_{i}$}: SINR at the base assigned to the $i$th mobile equipment.
\end{itemize}

Hence, $\gamma_{i}$ can be calculated as follows:
\[ \gamma_i=\frac{P_i G_{ii}}{\displaystyle \sum_{j\neq i}^m P_j G_{ij}}, \quad  i=1, \cdots, m\]

Now we consider the Distributed Power Control (DPC) scheme. In the DPC scheme, mobiles adjust their power synchronously in discrete time steps. The power adjustment by the $i$th mobile in the $n$th time instant is given by
\begin{equation}\label{eq:P_i_n}
P_i(n)=c(n-1) \frac{P_i(n-1)}{\gamma_i(n-1)}
\end{equation}
where $P_i(n)$ is the power transmitted by the $i$th mobile at the $n$th time step, $\gamma_i(n)$ is the resulting SINR and $c(n-1)\in \Rp$ is some constant.

Let $A$ be an $m\times m$ matrix defined as $A_{ij}=G_{ij}/G_{ii}$ for $i \neq j$ and $A_{ii}=0$ for $i=1, \cdots, m$. Thus, \eqref{eq:P_i_n} can be written as 
\begin{equation}\label{eq:PcAP}
P(n)=c(n-1)A(n-1)P(n-1)
\end{equation}
where
\[P(n):=
 \begin{pmatrix}
  P_1(n) &  \cdots &  P_{m-1}(n) & P_m(n) \end{pmatrix}^T\]
$P(n)$ can also be written as follows
\begin{equation}
P(n)=\Big(\prod_{i=0}^n c(i)\Big) \Big(\prod_{i=0}^n A(i)\Big) P(0)
\end{equation}
Note that the constants $c(i)$ for $i \in \NN$ doesn't affect SINR. It is a constant chosen properly to prevent the transmitter powers from becoming too large or too small.

For a matrix $M \in \RR^{n \times n}$, we denote the set of all eigenvalues of $M$ or \textit{spectrum} of $M$ by $\sigma(M)$. The \emph{spectral radius} of $M$ is represented by $\rho(M)$ and defined as follows:
\[\rho(M) := \max \{ |\lambda| : \lambda \in \sigma(M)\} \]
One property of spectral radius is that \cite[p. 14]{The05}:
\begin{equation}
\rho(M)=\lim_{k\rightarrow \infty}\|M^{k}\|^{1/k}
\end{equation}
where $\|\cdot\|$ represents any submultiplicative matrix norm. This property, has been the inspiration for Joint Spectral Radius which is introduced in \cite{RS60}. 
\begin{definition}
Associated to the set of matrices $\mathcal{M}$  is the
joint spectral radius, which is given by
\begin{equation}
\begin{split}
\rho(\mathcal{M}):= \lim_{t\to \infty} \sup \{ \| M(t-1)M(t-2)\dots
M(0)\|^{1/t} \\ \;|\; M(s) \in {\cal M}, s=0,\ldots,t-1 \} \,.
\end{split}
\end{equation}
\end{definition}
Although $\rho(\cdot)$ is used to represent both spectral radius of a matrix and joint spectral radius of a set of matrices, it should not lead to confusion. Also, it is easy to check that in the special case where the set $\MM$ contains only one matrix, joint spectral radius is equivalent to spectral radius.



In DPC Scheme, one aim is to make sure $P(n)$ will remain bounded as $n \rightarrow \infty$. We will use properties of the joint spectral radius to introduce a novel condition for convergence of the set of transmitter powers in DPC scheme. 
\begin{thm}\label{thm:DPC}
Consider a system that uses the DPC scheme, with $c(n)$, $n\in \NN$ chosen such that $\displaystyle \lim_{n\rightarrow \infty}\displaystyle\prod_{i=0}^n c(n)$ is bounded. Then $\displaystyle \lim_{n\rightarrow \infty}P(n)$ is bounded if $\rho(\BB)<1$ where
\[\BB=\{B(1), \cdots, B(n)\}\]
with $B(i)=c(i)A(i)$, for $i=1, \cdots, n$.
\end{thm}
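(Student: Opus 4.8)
The plan is to recognise the transmitted-power vector $P(n)$ as a single long product of matrices drawn from $\BB$, and then to read its decay directly off the hypothesis $\rho(\BB)<1$.

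First I would unwind the recursion \eqref{eq:PcAP}. Each $c(i)$ is a scalar, so it commutes with the matrices and may be grouped with $A(i)$; iterating the recursion therefore gives
\[
P(n)=\bigl(c(n-1)A(n-1)\bigr)\cdots\bigl(c(0)A(0)\bigr)P(0)=B(n-1)B(n-2)\cdots B(0)\,P(0).
\]
Thus $P(n)$ is the image of the fixed vector $P(0)$ under a product of exactly $n$ factors, each of which belongs to $\BB$. This is precisely the kind of product that appears inside the definition of $\rho(\BB)$, and it is the observation that lets the joint spectral radius govern the dynamics.

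Next I would extract a uniform geometric decay rate from $\rho(\BB)<1$. Fix any $\hat\rho$ with $\rho(\BB)<\hat\rho<1$. Writing $\bar\rho_t:=\sup\{\|M(t-1)\cdots M(0)\|:M(s)\in\BB\}$ for the worst-case norm of a length-$t$ product, these quantities are submultiplicative, $\bar\rho_{s+t}\le\bar\rho_s\bar\rho_t$, so by Fekete's lemma $\bar\rho_t^{1/t}$ converges to its infimum, which equals $\rho(\BB)$ by the definition of the joint spectral radius. Hence there is an integer $T$ such that $\bar\rho_t^{1/t}\le\hat\rho$, i.e.\ every length-$t$ product has norm at most $\hat\rho^{\,t}$, for all $t\ge T$. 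In particular, for all $n\ge T$,
\[
\|P(n)\|\le\|B(n-1)\cdots B(0)\|\,\|P(0)\|\le\hat\rho^{\,n}\,\|P(0)\|.
\]
Since $0<\hat\rho<1$ the right-hand side tends to $0$, so $P(n)\to0$; the finitely many earlier terms ($n<T$) are products of finitely many finite matrices and are therefore bounded. This yields the boundedness (indeed convergence to zero) of $\{P(n)\}$ claimed in the theorem.

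I expect the middle step to be the crux. The passage from the qualitative inequality $\rho(\BB)<1$ to a single geometric bound $\hat\rho^{\,t}$ valid uniformly over \emph{all} products of a given length is exactly the nontrivial content of the joint spectral radius, and it rests on the submultiplicativity/Fekete argument sketched above; one should also note that the finiteness of $\BB=\{B(1),\dots,B(n)\}$ guarantees $\rho(\BB)$ is finite and the suprema $\bar\rho_t$ are well defined. By contrast, the hypothesis that $\prod_i c(i)$ stays bounded plays only a supporting role: once the scalars are absorbed into the $B(i)$, the decay is already forced by $\rho(\BB)<1$, so this condition mainly serves to keep the family $\{B(i)\}$ bounded and thereby ensure the joint spectral radius machinery applies.
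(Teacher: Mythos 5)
Your proof is correct and follows essentially the same route as the paper: both reduce the claim to the fact that $\rho(\BB)<1$ forces a uniform geometric bound on all length-$t$ products of matrices from $\BB$, which applied to $P(n)=B(n-1)\cdots B(0)P(0)$ gives boundedness (indeed decay to zero). The only difference is that you derive that uniform bound yourself via submultiplicativity and Fekete's lemma, whereas the paper simply cites the known equivalence $\rho(\mathcal{M})<1 \Leftrightarrow \|M_{t-1}\cdots M_0\|\le C\gamma^t$ and the resulting exponential stability of the linear inclusion.
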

\begin{proof}
Recall that $\rho(\mathcal{M})<1$ is equivalent to the existence of constants $C \geq 1,\gamma \in (0,1)$ such that for all $t\in \NN$ we have
\begin{equation}\label{eq:Cg}
\| M_{t-1} \cdots M_1 M_0 \| \leq C \gamma^{t} \quad 
\end{equation}
for all $M_s\in \mathcal{M}$, $s=0,\ldots,t$ \cite{Wir02}. 
Thus $\rho(\mathcal{M})<1$ characterizes exponential stability of the linear
inclusion
\begin{equation}\label{eq:inclu}
x(t+1) \in \left\{ M x(t) \;|\; M\in \mathcal{M} \right\}
\end{equation}
Since $\rho(\BB)<1$, we can apply this result to the linear inclusion \eqref{eq:PcAP} to conclude that $P(n)$ will remain bounded for all $n\in \NN$. This conludes the proof.
\end{proof}
Note that the elements of the matrix $A(i)$, for $i=1, \cdots, n$ is specified by the parameters of the system. On the other hand, we have control over constants $c(i)$, for $i=1, \cdots, n$ and we can use that to make sure the condition $\rho(\BB)<1$ is satisfied.

Also, note that if $\rho(\BB)>1$, we cannot conclude that $P(n)$ will be unbounded as $n\rightarrow \infty$. To further clarify this point, note that by the generalised spectral radius theorem \cite{BW92}\cite{Elsn95}, if $\rho(\BB)>1$, then there is a finite sequence $(B_0, B_1, \cdots, B_{t-1})$ such that the spectral radius of the product $\rho(B_{t-1}\cdots B_1B_0)>1$ and hence the corresponding linear inclusion \eqref{eq:P_i_n} is exponentially unstable. But it is possible that $\big(B(0), B(1), \cdots, B(n))$ might not be such a sequence. Therefore, the condition presented in Theorem \ref{thm:DPC} is a sufficient but not a necessary condition for convergence of $P(n)$ as $n \rightarrow \infty$.

Theorem \ref{thm:DPC} can be extended to Distributed Balancing Algorithm (DBA). In the DBA scheme, the iteration for the power levels is described by
\begin{equation}\label{eq:DBAPi}
P_i(n)=c(n-1)\Big(P_i(n-1)+\frac{P_i(n-1)}{\gamma_i(n-1)}\Big)
\end{equation}
for $i=1, \cdots, m$ and $n\geq 1$. We can write \eqref{eq:DBAPi} in matrix form as
\begin{equation}
P(n)=c(n-1)Z(n-1)P(n-1)
\end{equation}
where 
\[Z(n-1)=A(n-1)+I\]
\begin{thm}\label{thm:DPC}
For a system that uses the DBA scheme, with $c(n)$, $n\in \NN$ chosen such that $\displaystyle \lim_{n\rightarrow \infty}\displaystyle\prod_{i=0}^n c(n)$ is bounded,  then $\displaystyle \lim_{n\rightarrow \infty}P(n)$ is bounded if $\rho(\BB)<1$ where
\[\BB=\{B(1), \cdots, B(n)\}\]
with $B(i)=c(i)Z(i)$ and $Z(i)=A(i)+I$, for $i=1, \cdots, n$.
\end{thm}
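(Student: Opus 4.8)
The plan is to recognise that, despite the nonlinear appearance of the DBA recursion \eqref{eq:DBAPi}, the update is in fact a linear map at each time step, so that the argument reduces to the one already used for the DPC scheme. The first step is to justify the matrix form $P(n)=c(n-1)Z(n-1)P(n-1)$. From the definition of $\gamma_i$ and of the matrix $A$ we have $P_i/\gamma_i=\sum_{j\neq i}P_jG_{ij}/G_{ii}=\sum_{j\neq i}A_{ij}P_j=(AP)_i$, using that $A$ has zero diagonal. Substituting this into \eqref{eq:DBAPi} gives $P_i(n)=c(n-1)\big(P_i(n-1)+(A(n-1)P(n-1))_i\big)$, that is $P(n)=c(n-1)(I+A(n-1))P(n-1)=c(n-1)Z(n-1)P(n-1)$, which is exactly the claimed linear update with $Z(i)=A(i)+I$.

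Writing $B(i)=c(i)Z(i)$ as in the statement, the recursion becomes $P(n)=B(n-1)P(n-1)$, and unfolding it yields $P(n)=B(n-1)B(n-2)\cdots B(0)\,P(0)$. I would then invoke the same characterisation used for the DPC scheme: since $\rho(\BB)<1$, there exist constants $C\geq 1$ and $\gamma\in(0,1)$ with $\|B_{t-1}\cdots B_0\|\leq C\gamma^{t}$ for every admissible product, exactly as in \eqref{eq:Cg}. Applying this bound gives $\|P(n)\|\leq C\gamma^{n}\|P(0)\|$, and since $\gamma\in(0,1)$ the right-hand side tends to zero; hence $\{P(n)\}$ is not merely bounded but converges to the origin as $n\to\infty$, which establishes the claim.

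The point worth emphasising is that the only genuinely new ingredient relative to the DPC case is the reduction of \eqref{eq:DBAPi} to a linear inclusion of the form \eqref{eq:inclu}, for which all that is needed is the identity $P_i/\gamma_i=(AP)_i$ together with the observation that retaining the extra self-term $P_i(n-1)$ simply replaces $A(i)$ by $A(i)+I=Z(i)$. Once the dynamics are written as $P(n)=B(n-1)P(n-1)$, with the $c(i)$ already absorbed into $B(i)$, the remainder is verbatim the DPC argument, so I do not anticipate any substantive obstacle. The hypothesis that $\prod_i c(i)$ remain bounded plays no essential role beyond keeping the $c(i)$ (and hence the $B(i)$) well behaved; the decisive condition is $\rho(\BB)<1$ imposed on the matrices that already incorporate the $c(i)$.
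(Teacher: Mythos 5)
Your proposal is correct and follows essentially the same route as the paper, which simply reduces the DBA case to the argument of Theorem~\ref{thm:DPC} via the rewriting $P(n)=c(n-1)Z(n-1)P(n-1)$ and the exponential-stability characterisation of $\rho(\BB)<1$. You in fact supply more detail than the paper does (the explicit verification that $P_i/\gamma_i=(AP)_i$, and the sharper conclusion $\|P(n)\|\leq C\gamma^{n}\|P(0)\|\to 0$), but the underlying argument is the same.
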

\begin{proof}
The theorem can be proven using the same method as the proof of Theorem \ref{thm:DPC}.
\end{proof}
It should also be mentioned that calculating joint spectral radius for an arbitrary set of matrices is an NP hard problem as shown in \cite{TB97}. But the good news is that algorithms exist which can reach an arbitrary accuracy in an a priori computable amount of time. These algorithms use the concept of \emph{extremal norm} that is a norm with the property that $\|M\|\leq \MM$ for all $M\in \MM$.  For more information on these algorithms you can look at \cite{GWZ05,BNT05,BN05,PJ08,PJB10}. 

\section{Conclusions}\label{sec:conc}
In this manuscript, we presented a novel condition for convergence of transmitted power in a mobile network under Distributed Power Control (DPC) and Distributed Balancing Algorithm (DBA) schemes. We have used the concept of joint spectral radius to obtain the results.

\bibliographystyle{IEEEtrans}
\bibliography{The_Mother_of_All_Biblios_1304_2}

\end{document}